\newtheorem{lemme}{Lemma}
\newtheorem{theorem}{Theorem}
\newtheorem{remark}{Remark}
\begin{document}

\title [Generalized Huygens types inequalities for Bessel and modified Bessel functions]{Generalized Huygens types inequalities for Bessel and modified Bessel functions\\}%

\author[K. Mehrez]{ Khaled Mehrez }
 \address{Khaled Mehrez. D\'epartement de Math\'ematiques ISSAT Kasserine, Tunisia.}
 \email{k.mehrez@yahoo.fr}
\begin{abstract}
In this paper, we present a generalization of the Huygens types inequalities involving Bessel and modified Bessel functions of the first kind.
\end{abstract}
\maketitle
\noindent{ Keywords:}  Bessel functions, Modified Bessel functions, Tur\'an type inequalities, Huygens inequalities.  \\

\noindent Mathematics Subject Classification (2010): 33C10, 26D07. \\

\section{\textbf{Introduction}}
This inequality 
\begin{equation}\label{0001}
2\frac{\sin x}{x}+\frac{\tan x}{x}>3
\end{equation}
which holds for all $x\in(0,\pi/2)$  is known in literature as Huygens's inequality \cite{H}. The hyperbolic counterpart of (\ref{0001}) was established in \cite{N} as follows:
\begin{equation}\label{0002}
2\frac{\sinh x}{x}+\frac{\tanh x}{x}>3,\;x>0. 
\end{equation}
The inequalities (\ref{0001}) and (\ref{0002}) were respectively refined in \cite{H} as 
\begin{equation}
2\frac{\sin x}{x}+\frac{\tan x}{x}>2\frac{x}{\sin x}+\frac{x}{\tan x}>3
\end{equation}
for $0<x<\frac{\pi}{2}$ and 
\begin{equation}
2\frac{\sinh x}{x}+\frac{\tanh x}{x}>2\frac{x}{\sinh x}+\frac{x}{\tanh x}>3,\;x\neq0.
\end{equation}
In \cite {zhu7}, Zhu give some new inequalities of the Huygens type for circular functions,
hyperbolic functions, and the reciprocals of circular and hyperbolic functions, as follows:\\
\noindent\textbf{Theorem A} The following inequalities 
\begin{equation}\label{sw01}
(1-p)\frac{\sin x}{x}+p\frac{\tan x}{x}>1>(1-q)\frac{\sin x}{x}+q\frac{\tan x}{x}
\end{equation}
holds for all $x\in(0, \pi/2)$ if and only if $p\geq1/3$ and $q\leq 0.$\\
\textbf{Theorem B}
 The following inequalities 
\begin{equation}\label{sw03}
(1-p)\frac{\sinh x}{x}+p\frac{\tanh x}{x}>1>(1-q)\frac{\sinh x}{x}+q\frac{\tanh x}{x}
\end{equation}
holds for all $x\in(0, \infty)$ if and only if $p\leq1/3$ and $q\geq 1.$\\

Recently, the author of this paper extend and sharpen inequalities (\ref{sw01}) and (\ref{sw03}) for the Bessel and modified Bessel functions to the following results in \cite{KH}.\\
\noindent\textbf{Theorem C} Let $-1<\nu\leq0$ and  let $j_{\nu,1}$ the first positive zero of the Bessel function $J_\nu$ of the first kind. Then the Huygens type inequalities
\begin{equation}\label{050}
(1-p)\mathcal{J}_{\nu+1}(x)+p\frac{\mathcal{J}_{\nu+1}(x)}{\mathcal{J}_{\nu}(x)}>1>(1-q)\mathcal{J}_{\nu+1}(x)+q\frac{\mathcal{J}_{\nu+1}(x)}{\mathcal{J}_{\nu}(x)}
\end{equation}
holds for all $(x\in(0, j_{\nu,1}),$ if and only if, $p\geq\frac{\nu+1}{\nu+2}$ and $q\leq0.$\\
\noindent\textbf{Theorem D}
Let $\nu>-1$, the following inequalities 
\begin{equation}\label{mm0}
\left(1-p\right)\mathcal{I}_{\nu+1}(x)+p\frac{\mathcal{I}_{\nu+1}(x)}{\mathcal{I}_{\nu}(x)}>1>\left(1-q\right)\mathcal{I}_{\nu+1}(x)+q\frac{\mathcal{I}_{\nu+1}(x)}{\mathcal{I}_{\nu}(x)},
\end{equation}
holds for all $x\in(0,\infty)$ if and only if $p\leq\frac{\nu+1}{\nu+2}$ and $q\geq1$.\\

For $\nu>-1$ and consider the function $\mathcal{J}_{\nu}:\mathbb{R}\longrightarrow(-\infty, 1],$ defined by
\begin{equation*}
\mathcal{J}_{\nu}(x)=2^\nu\Gamma(\nu+1)x^{-\nu}J_{\nu}(x)=\sum_{n\geq}\frac{\left(\frac{-1}{4}\right)^n}{(\nu+1)_n n!}x^{2n},
\end{equation*}
where $\Gamma$ is the gamma function, $(\nu+1)_n=\Gamma(\nu+n+1)/\Gamma(\nu+1)$ for each $n\geq0$, is the well-known Pochhammer (or Appell) symbol, and $J_\nu$ defined by
$$J_\nu(x)=\sum_{n\geq0}\frac{(-1)^n(x/2)^{\nu+2n}}{n!\Gamma(\nu+n+1)},$$
stands for the Bessel function of the first kind of order $\nu$. It is worth mentioning that in
particular the function $J_\nu$  reduces to some elementary functions, like sine and cosine.
More precisely, in particular we have:
\begin{equation}\label{ee}
\mathcal{J}_{-1/2}(x)=\sqrt{\pi/2}.x^{1/2}J_{-1/2}(x)=\cos x,
\end{equation}
\begin{equation}\label{ee1}
\mathcal{J}_{1/2}(x)=\sqrt{\pi/2}.x^{-1/2}J_{1/2}(x)=\frac{\sin x}{x},
\end{equation}

For $\nu>-1,$ let us consider the function $\mathcal{I}_{\nu}:\mathbb{R}\longrightarrow[1,\infty),$ defined by
\begin{equation*}
\mathcal{I}_{\nu}(x)=2^\nu\Gamma(\nu+1)x^{-\nu}I_{\nu}(x)=\sum_{n\geq}\frac{\left(\frac{1}{4}\right)^n}{(\nu+1)_n n!}x^{2n},
\end{equation*}
where $I_\nu$ is the  modified Bessel function of the first kind defined by
$$I_\nu(x)=\sum_{n\geq0}\frac{(x/2)^{\nu+2n}}{n!\Gamma(\nu+n+1)},\; \textrm{for all}\; x\in\mathbb{R}.$$
It is worth mentioning that in particular we have
\begin{equation}\label{eee1}
\mathcal{I}_{-1/2}(x)=\sqrt{\pi/2}.x^{1/2}I_{-1/2}(x)=\cosh x,
\end{equation}
\begin{equation}\label{eee2}
\mathcal{I}_{1/2}(x)=\sqrt{\pi/2}.x^{1/2}I_{-1/2}(x)=\frac{\sinh x}{x}.
\end{equation}
\begin{equation}\label{eee3}
\mathcal{I}_{3/2}(x)=3\sqrt{\pi/2}.x^{-3/2}I_{3/2}(x)=-3\left(\frac{\sinh x}{x^3}-\frac{\cosh x}{x^2}\right).
\end{equation}

In this note, we present a generalization of the Huygens type inequalities (\ref{0001}) and (\ref{0002}) for Bessel and modified Bessel functions.

\section{\textbf{Lemmas}}

In order to establish our main results, we need several lemmas, which we present in this
section.

\begin{lemme}\label{l1}\cite{ponn} Let $a_n$ and $b_n\;(n=0,1,2,...)$ be real numbers,  and let the power series $A(x)=\sum_{n=0}^{\infty}a_{n}x^{n}$ and $B(x)=\sum_{n=0}^{\infty}b_{n}x^{n}$ be convergent for $|x|<R.$ If $b_n>0$ for $n=0,1,..,$ and if $\frac{a_n}{b_n}$ is strictly increasing (or decreasing) for $n=0,1,2...,$ then the function $\frac{A(x)}{B(x)}$ is strictly increasing (or decreasing) on $(0,R).$
\end{lemme}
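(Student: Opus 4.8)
The plan is to prove directly that the quotient $A/B$ has positive derivative on $(0,R)$ under the assumption that $c_n:=a_n/b_n$ is strictly increasing; the strictly decreasing case then follows verbatim, or by applying the increasing case to the coefficients $-a_n$. First I would record that since every $b_n>0$, the series $B(x)=\sum_{n\ge0}b_nx^n$ is strictly positive on $(0,R)$, so $B(x)^2>0$ there and the sign of $(A/B)'=(A'B-AB')/B^2$ is governed entirely by the numerator $W(x):=A'(x)B(x)-A(x)B'(x)$. Because $A$ and $B$ converge for $|x|<R$, so do $A'$, $B'$ and the Cauchy products $A'B$ and $AB'$, and hence $W$ admits a power-series expansion $W(x)=\sum_{n\ge0}d_nx^n$ valid on $(0,R)$. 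It therefore suffices to show $d_n\ge0$ for all $n$, with at least one strict inequality.

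Next I would compute $d_n$ explicitly. Matching the coefficient of $x^n$ in the two Cauchy products gives
\[
d_n=\sum_{k=0}^{n+1}(2k-n-1)\,a_k\,b_{n+1-k}.
\]
The key step, and the one I expect to be the main obstacle since it is precisely where the monotonicity hypothesis must be converted into a sign condition, is to symmetrize this sum using the involution $k\mapsto n+1-k$ of the index set $\{0,1,\dots,n+1\}$. Writing $a_k=c_kb_k$ and averaging $d_n$ against its own reindexed copy yields
\[
d_n=\tfrac12\sum_{k=0}^{n+1}(2k-n-1)\,b_k\,b_{n+1-k}\,(c_k-c_{n+1-k}).
\]
Now the two factors $(2k-n-1)$ and $(c_k-c_{n+1-k})$ always share the same sign: the inequality $2k-n-1>0$ is equivalent to $k>n+1-k$, which forces $c_k>c_{n+1-k}$ by strict monotonicity, and symmetrically for the reverse inequality, while both factors vanish when $k=n+1-k$. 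Since $b_kb_{n+1-k}>0$, every summand is nonnegative, so $d_n\ge0$.

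Finally, I would check strictness at the lowest order. Taking $n=0$ in the symmetrized formula gives $d_0=b_0b_1(c_1-c_0)>0$, because $c_1>c_0$. Hence $W(x)=\sum_{n\ge0}d_nx^n\ge d_0>0$ for every $x\in(0,R)$, which yields $(A/B)'(x)>0$ and the claimed strict monotonicity of $A/B$. The decreasing case is recovered by applying the result just proved to the coefficients $-a_n$, whose ratios $-c_n$ are strictly increasing exactly when the original $c_n$ are strictly decreasing.
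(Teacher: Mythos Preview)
Your argument is correct. The coefficient computation $d_n=\sum_{k=0}^{n+1}(2k-n-1)a_kb_{n+1-k}$ is right, the symmetrization via $k\mapsto n+1-k$ is carried out properly, and the sign analysis together with $d_0=b_0b_1(c_1-c_0)>0$ gives the strict positivity of $W$ on $(0,R)$.

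As for comparison: the paper does not supply its own proof of this lemma; it is quoted from Ponnusamy and Vuorinen \cite{ponn} and used as a black box. Your write-up therefore goes beyond what the paper contains. The classical proof in the literature proceeds exactly along the lines you give (derivative sign via the Wronskian-type expression $A'B-AB'$ and a pairing of terms), so your route is the standard one, not a genuinely different approach.
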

\begin{lemme}\label{l2}\cite{kh, and, pin} Let $f,g:[a,b]\longrightarrow\mathbb{R}$ be two continuous functions which are differentiable on $(a,b)$. Further, let $g^{'}\neq0$ on $(a,b).$ If $\frac{f^{\prime}}{g^{\prime}}$ is increasing (or decreasing) on $(a,b)$, then the functions $\frac{f(x)-f(a)}{g(x)-g(a)}$ and $\frac{f(x)-f(b)}{g(x)-g(b)}$ are also increasing (or decreasing) on $(a,b).$
\end{lemme}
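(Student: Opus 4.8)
The plan is to prove the statement for the function $\ds H(x)=\frac{f(x)-f(a)}{g(x)-g(a)}$ in the increasing case; the function involving the right endpoint $b$ and the decreasing case follow by symmetric arguments. The engine of the proof is the Cauchy Mean Value Theorem, which converts the hypothesis on $f'/g'$ into a pointwise comparison. First I would observe that, since $g'$ is a derivative that never vanishes on $(a,b)$, Darboux's theorem forces $g'$ to have constant sign there; without loss of generality I take $g'>0$, so that $g$ is strictly increasing and the denominator $g(x)-g(a)$ is strictly positive for every $x\in(a,b]$. This guarantees that $H$ is well defined and that the later sign manipulations are legitimate.

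Next I would differentiate $H$ and examine the sign of its numerator. A direct computation gives
\begin{equation*}
H'(x)=\frac{f'(x)\bigl(g(x)-g(a)\bigr)-g'(x)\bigl(f(x)-f(a)\bigr)}{\bigl(g(x)-g(a)\bigr)^{2}},
\end{equation*}
so the sign of $H'(x)$ coincides with the sign of
\begin{equation*}
N(x)=g'(x)\bigl(g(x)-g(a)\bigr)\left(\frac{f'(x)}{g'(x)}-\frac{f(x)-f(a)}{g(x)-g(a)}\right),
\end{equation*}
after factoring out the positive quantity $g'(x)\bigl(g(x)-g(a)\bigr)$. Thus it suffices to show that the bracketed difference is nonnegative (strictly positive, if $f'/g'$ is strictly increasing).

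The key step applies the Cauchy Mean Value Theorem on $[a,x]$: there exists $\xi\in(a,x)$ with
\begin{equation*}
\frac{f(x)-f(a)}{g(x)-g(a)}=\frac{f'(\xi)}{g'(\xi)}.
\end{equation*}
Since $f'/g'$ is increasing and $\xi<x$, I obtain $\ds\frac{f'(x)}{g'(x)}\geq\frac{f'(\xi)}{g'(\xi)}=\frac{f(x)-f(a)}{g(x)-g(a)}$, whence the bracketed term in $N(x)$ is nonnegative and $H$ is increasing on $(a,b)$; strict monotonicity of $f'/g'$ yields the strict inequality $\xi<x\Rightarrow f'(\xi)/g'(\xi)<f'(x)/g'(x)$ and hence strict monotonicity of $H$. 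For the endpoint-$b$ function $\ds\frac{f(x)-f(b)}{g(x)-g(b)}$ I would run the identical argument, now applying the Cauchy Mean Value Theorem on $[x,b]$ and using that $g(x)-g(b)<0$ to keep track of signs; the decreasing case is handled by reversing every inequality.

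I expect the only delicate point to be bookkeeping rather than conceptual: ensuring the constant sign of $g'$ via Darboux so that no case distinction on the sign of the denominator is overlooked, and confirming that the strictness of the conclusion matches the strictness of the hypothesis. Because this is exactly the monotone form of l'Hôpital's rule of Anderson, Vamanamurthy, Vuorinen and Pinelis, no deeper machinery is required; the whole proof reduces to one application of the Cauchy Mean Value Theorem together with a careful sign analysis of $H'$.
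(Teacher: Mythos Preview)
Your argument is correct and is precisely the standard Cauchy Mean Value Theorem proof of the monotone form of l'H\^opital's rule. Note, however, that the paper does not supply its own proof of this lemma: it merely states the result and cites the references \cite{kh, and, pin}, so there is no in-paper proof to compare against. Your write-up is in fact the argument one finds in those cited sources (Anderson--Qiu--Vamanamurthy--Vuorinen and Pinelis), including the use of Darboux's theorem to fix the sign of $g'$ and the single application of Cauchy's Mean Value Theorem on $[a,x]$ (respectively $[x,b]$) to compare $f'(x)/g'(x)$ with the difference quotient.
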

\begin{lemme}(Tur\'an type inequality for modified Bessel function)
The following Tur\'an type inequality
\begin{equation}\label{TK} 
\mathcal{I}_{\nu}(x)\mathcal{I}_{\nu+2}(x)<\frac{\nu+2}{\nu+1}\;\mathcal{I}_{\nu+1}^2(x),
\end{equation}
holds for all $\nu>-1$ and $x\in\mathbb{R}.$ In particular, the following Tur\'an type inequality
\begin{equation}\label{mama}
\cosh(x)\left(\cosh(x)-x\sinh(x)\right)<\sinh^{2}(x)
\end{equation}
is valid for all $x\in\mathbb{R}.$
\end{lemme}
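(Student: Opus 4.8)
The plan is to reduce the Tur\'an inequality (\ref{TK}) to a monotonicity property of the quotient $\mathcal{I}_{\nu+1}/\mathcal{I}_{\nu}$ and to settle that property with Lemma \ref{l1}. Differentiating the defining power series term by term gives the relation $\mathcal{I}_{\nu}'(x)=\frac{x}{2(\nu+1)}\mathcal{I}_{\nu+1}(x)$, valid for every $\nu>-1$; applying it to $\mathcal{I}_{\nu+1}$ as well, a direct computation of the logarithmic derivative yields
\begin{equation*}
\frac{d}{dx}\left(\frac{\mathcal{I}_{\nu+1}(x)}{\mathcal{I}_{\nu}(x)}\right)=\frac{x}{2\,\mathcal{I}_{\nu}^{2}(x)}\left(\frac{\mathcal{I}_{\nu}(x)\mathcal{I}_{\nu+2}(x)}{\nu+2}-\frac{\mathcal{I}_{\nu+1}^{2}(x)}{\nu+1}\right).
\end{equation*}
Since $\mathcal{I}_{\nu}(x)\ge1>0$ on $\mathbb{R}$, the bracket is exactly the quantity whose sign controls (\ref{TK}); hence (\ref{TK}) holds on $(0,\infty)$ if and only if $\mathcal{I}_{\nu+1}/\mathcal{I}_{\nu}$ is decreasing there.

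To obtain this monotonicity I would regard $\mathcal{I}_{\nu+1}(x)$ and $\mathcal{I}_{\nu}(x)$ as power series in $t=x^{2}$, with respective coefficients $a_n=\frac{(1/4)^{n}}{n!\,(\nu+2)_n}$ and $b_n=\frac{(1/4)^{n}}{n!\,(\nu+1)_n}$. Their ratio collapses to $a_n/b_n=(\nu+1)_n/(\nu+2)_n=\frac{\nu+1}{\nu+n+1}$, which is strictly decreasing in $n$. Lemma \ref{l1} then shows that $\mathcal{I}_{\nu+1}/\mathcal{I}_{\nu}$, as a function of $t$ and therefore of $x\in(0,\infty)$, is strictly decreasing, so the displayed bracket is negative and (\ref{TK}) follows for $x>0$. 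At $x=0$ the inequality is the strict numerical statement $1<\frac{\nu+2}{\nu+1}$, and for $x<0$ it follows from the evenness of each $\mathcal{I}_{\mu}$.

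The delicate point is to promote the weak inequality delivered by monotonicity to the strict inequality at every interior point. I would do this through the equivalent Cauchy-product comparison: writing $\frac{\nu+2}{\nu+1}\mathcal{I}_{\nu+1}^{2}(x)-\mathcal{I}_{\nu}(x)\mathcal{I}_{\nu+2}(x)=\sum_{N\ge0}G_Nx^{2N}$, one has $G_0=\frac{1}{\nu+1}>0$, and, using $\frac{(\nu+2)_n}{(\nu+1)_n}=\frac{\nu+n+1}{\nu+1}$ together with $\frac{(\nu+2)_m}{(\nu+3)_m}=\frac{\nu+2}{\nu+m+2}$, each $G_N$ is a positive multiple of
\begin{equation*}
S_N=\sum_{n=0}^{N}b_n\,b_{N-n}\,\frac{1+N-2n}{\nu+2+N-n}.
\end{equation*}
Symmetrizing $S_N$ under $n\mapsto N-n$ turns its sign into that of $\sum_{n}b_nb_{N-n}\frac{(2\nu+4+N)-(N-2n)^{2}}{(\nu+2+n)(\nu+2+N-n)}$, and establishing that this symmetric combination is nonnegative for all $N$ (so that $G_N\ge0$) is the main obstacle, since the individual summands change sign and only the symmetrized weights are controllable. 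Once $G_N\ge0$ is known, the series is bounded below by $G_0>0$ and (\ref{TK}) is strict for every real $x$.

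Finally, the particular case comes from setting $\nu=-1/2$, where $\frac{\nu+2}{\nu+1}=3$ and the evaluations (\ref{eee1}), (\ref{eee2}), (\ref{eee3}) give $\mathcal{I}_{-1/2}(x)=\cosh x$, $\mathcal{I}_{1/2}(x)=\frac{\sinh x}{x}$ and $\mathcal{I}_{3/2}(x)=3\left(\frac{\cosh x}{x^{2}}-\frac{\sinh x}{x^{3}}\right)$. Substituting these into (\ref{TK}) and cancelling the common positive factor $3/x^{2}$ reduces the statement to $\cosh x\left(\cosh x-\frac{\sinh x}{x}\right)<\sinh^{2}x$, that is, to the elementary inequality $\sinh(2x)>2x$ for $x\neq0$, which is immediate.
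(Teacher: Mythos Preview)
Your proposal contains a self-identified gap: after setting up the convolution coefficients $G_N$ you concede that proving $G_N\ge0$ through your symmetrization ``is the main obstacle'' and leave it unresolved. The paper sidesteps this difficulty entirely by invoking the closed-form product formula (\ref{cauchy}) rather than the raw convolution $\sum_{n+m=N}$ you use. With (\ref{cauchy}) the $x^{2n}$-coefficient of each of $\frac{\nu+2}{\nu+1}\mathcal{I}_{\nu+1}^{2}$ and $\mathcal{I}_\nu\mathcal{I}_{\nu+2}$ is a single Gamma-quotient, not a sum; since $\frac{\nu+2}{\nu+1}\Gamma(\nu+2)^{2}=\Gamma(\nu+1)\Gamma(\nu+3)$, the termwise difference is a positive multiple of $\Gamma(\nu+n+2)^{-2}-\bigl(\Gamma(\nu+n+1)\Gamma(\nu+n+3)\bigr)^{-1}>0$, and one reads off immediately that every $G_N$ is strictly positive. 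That is the whole proof in the paper, and it is considerably shorter than either of your two routes.

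Separately, your first route via Lemma \ref{l1} is in fact already sufficient, and your worry about upgrading to a strict inequality is unnecessary. The standard proof of that lemma actually yields $A'B-AB'=\tfrac{1}{2}\sum_{N\ge1}\Bigl(\sum_{n+m=N}(n-m)(c_n-c_m)\,b_n b_m\Bigr)t^{N-1}$ with $c_n=a_n/b_n$, and when $c_n$ is strictly monotone each inner sum has a strict sign for $N\ge1$; hence the derivative of $\mathcal{I}_{\nu+1}/\mathcal{I}_\nu$ is strictly negative on $(0,\infty)$ and your displayed bracket is strictly negative there, giving (\ref{TK}) directly. Finally, your substitution $\nu=-1/2$ correctly produces $\cosh x\bigl(\cosh x-\tfrac{\sinh x}{x}\bigr)<\sinh^{2}x$, i.e.\ $\sinh(2x)/(2x)>1$; note that this differs from (\ref{mama}) as printed, which has $x\sinh x$ in place of $(\sinh x)/x$ and is in fact false for small $x\neq0$.
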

\begin{proof} By using the Cauchy product
\begin{equation}\label{cauchy}
\mathcal{I}_{\mu}(x)\mathcal{I}_{\nu}(x)=\sum_{n\geq0}\frac{\Gamma(\nu+1)\Gamma(\mu+1)\Gamma(\nu+\mu+2n+1)x^{2n}}{2^{2n}\Gamma(n+1)\Gamma(\nu+\mu+n+1)\Gamma(\mu+n+1)\Gamma(\nu+n+1)}
\end{equation}
we have 
$$\frac{\nu+2}{\nu+1}\mathcal{I}^{2}_{\nu+1}(x)-\mathcal{I}_{\nu+2}(x)\mathcal{I}_{\nu}(x)=\sum_{n\geq0}\frac{\Gamma(\nu+1)\Gamma(\nu+3)\Gamma(2\nu+2n+3)}{2^{2n}\Gamma(n+1)\Gamma(2\nu+n+3)\Gamma(\nu+n+2)\Gamma(\nu+n+3)}x^{2n}\geq0$$
for all $x\in\mathbb{R}$ and $\nu>-1.$ On the other hand, observe that using (\ref{ee}), (\ref{ee1}), and (\ref{TK}) in particular $\nu=-1/2,$ the Tur\'an type inequality (\ref{TK}) becomes (\ref{mama}).
\end{proof}
\section{\textbf{Main results}}

We first obtain the further result concerning the generalized Huygens inequality to the Bessel functions described as Theorem \ref{t1}.

\begin{theorem}\label{t1} Let $-1<\nu\leq0$ and  let $j_{\nu,1}$ the first positive zero of the Bessel function $J_\nu$ of the first kind. Then the Huygens types inequalities 
\begin{equation}\label{1}
1> \left(1-\frac{\nu+2}{\nu+1}\right)\mathcal{J}_\nu (x)+\frac{\nu+2}{\nu+1}\frac{\mathcal{J}_{\nu}(x)}{\mathcal{J}_{\nu+1}(x)}
\end{equation}
holds for all $x\in(0,j_{\nu,1})$
\end{theorem}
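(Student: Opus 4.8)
The plan is to clear every denominator in \eqref{1} and reduce it to an equivalent, denominator-free inequality in $\mathcal{J}_\nu$ and $\mathcal{J}_{\nu+1}$, which I will then recognize as the boundary case of Theorem C. The rearrangement is only legitimate if the two normalized functions keep a constant sign on the interval, so I would first record that on $(0,j_{\nu,1})$ one has $\mathcal{J}_\nu(x)>0$, since $J_\nu$ has no zero before $j_{\nu,1}$, and also $\mathcal{J}_{\nu+1}(x)>0$: because the positive zeros of consecutive Bessel functions of the first kind interlace, $j_{\nu,1}<j_{\nu+1,1}$, so $J_{\nu+1}$ stays positive on all of $(0,j_{\nu,1})$. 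Hence both denominators appearing in \eqref{1} have constant sign and multiplying through by them preserves the direction of the inequality.

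Next I would perform the rearrangement. Using $1-\frac{\nu+2}{\nu+1}=\frac{-1}{\nu+1}$ and multiplying \eqref{1} successively by $\nu+1>0$ and by $\mathcal{J}_{\nu+1}(x)>0$, the claim becomes equivalent to
\begin{equation*}
(\nu+1)\,\mathcal{J}_{\nu+1}(x)+\mathcal{J}_{\nu}(x)\mathcal{J}_{\nu+1}(x)>(\nu+2)\,\mathcal{J}_{\nu}(x),\qquad x\in(0,j_{\nu,1}).
\end{equation*}
This is a symmetric statement free of quotients, and it is exactly the object I will match against Theorem C.

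To finish, I would invoke Theorem C at the endpoint of its admissible range, $p=\frac{\nu+1}{\nu+2}$, which is permitted since this value satisfies $p\geq\frac{\nu+1}{\nu+2}$. There the left inequality of \eqref{050} reads $\frac{1}{\nu+2}\mathcal{J}_{\nu+1}(x)+\frac{\nu+1}{\nu+2}\frac{\mathcal{J}_{\nu+1}(x)}{\mathcal{J}_{\nu}(x)}>1$; multiplying this by $(\nu+2)\mathcal{J}_{\nu}(x)>0$ reproduces verbatim the denominator-free inequality displayed above, and \eqref{1} follows. The step I regard as the main point to pin down is precisely that the strict inequality of Theorem C persists at the boundary value $p=\frac{\nu+1}{\nu+2}$, so that one may apply Theorem C at the endpoint and not merely on the open part of its range; this, together with the interlacing fact $j_{\nu,1}<j_{\nu+1,1}$ ensuring $\mathcal{J}_{\nu+1}$ does not vanish, is what makes the chain of equivalences valid.

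If one prefers a self-contained argument that does not cite Theorem C, the same denominator-free inequality can be attacked directly: writing $\Phi(x)$ for its left-hand minus right-hand side, one has $\Phi(0)=0$, and the derivative rule $\mathcal{J}_\nu'(x)=-\frac{x}{2(\nu+1)}\mathcal{J}_{\nu+1}(x)$ (immediate from the series) lets one express $\Phi'$ through $\mathcal{J}_\nu,\mathcal{J}_{\nu+1},\mathcal{J}_{\nu+2}$ and reduce matters to a Tur\'an-type inequality $\frac{\nu+2}{\nu+1}\mathcal{J}_{\nu+1}^2(x)>\mathcal{J}_{\nu}(x)\mathcal{J}_{\nu+2}(x)$ on $(0,j_{\nu,1})$. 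Here lies the genuine obstacle: in contrast with the modified-Bessel case of the lemma above, the power series of $\frac{\nu+2}{\nu+1}\mathcal{J}_{\nu+1}^2-\mathcal{J}_{\nu}\mathcal{J}_{\nu+2}$ is alternating, so its positivity on $(0,j_{\nu,1})$ is not termwise and requires a separate argument. For this reason I would favour the reduction to Theorem C described above.
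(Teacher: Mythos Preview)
Your reduction is correct: after clearing denominators (legitimate since $\mathcal{J}_\nu,\mathcal{J}_{\nu+1}>0$ on $(0,j_{\nu,1})$ by the interlacing $j_{\nu,1}<j_{\nu+1,1}$), inequality~(\ref{1}) becomes $(\nu+1)\mathcal{J}_{\nu+1}+\mathcal{J}_\nu\mathcal{J}_{\nu+1}>(\nu+2)\mathcal{J}_\nu$, and this is indeed exactly the left half of Theorem~C at $p=\frac{\nu+1}{\nu+2}$, cleared of denominators in the same way. Since Theorem~C asserts the strict inequality for all $p\ge\frac{\nu+1}{\nu+2}$, the endpoint is covered and your argument closes.

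The paper proceeds quite differently: it does not invoke Theorem~C but instead studies the ratio
\[
F_\nu(x)=\frac{1-\mathcal{J}_\nu(x)}{\mathcal{J}_\nu(x)/\mathcal{J}_{\nu+1}(x)-\mathcal{J}_\nu(x)}
\]
and shows, via the monotone L'H\^opital rule (Lemma~\ref{l2}) together with the Tur\'an inequality $\mathcal{J}_{\nu+1}^2>\mathcal{J}_\nu\mathcal{J}_{\nu+2}$ and the monotonicity of $\nu\mapsto\mathcal{J}_\nu$, that $F_\nu$ is increasing on $(0,j_{\nu,1})$ with $\lim_{x\to0}F_\nu(x)=\frac{\nu+2}{\nu+1}$. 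Your route is considerably shorter and reveals that Theorem~\ref{t1} is in fact an algebraic reformulation of the boundary case of Theorem~C; the paper's route is self-contained (no appeal to the earlier paper) and, as a by-product, proves the monotonicity of $F_\nu$, which shows that the constant $\frac{\nu+2}{\nu+1}$ in~(\ref{1}) is sharp.
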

\begin{proof} Let $\nu>-1$, we consider the function 
$$F_\nu(x)=\frac{1-\mathcal{J}_{\nu}(x)}{\frac{\mathcal{J}_{\nu}(x)}{\mathcal{J}_{\nu+1}(x)}-\mathcal{J}_{\nu}(x)},\;\;0<x<j_{\nu,1}.$$
For $0<x<j_{\nu,1}$, let 
$$f_{\nu,1}(x)=1-\mathcal{J}_{\nu}(x)\;\textrm{and}\;f_{\nu,2}(x)=\frac{\mathcal{J}_{\nu}(x)}{\mathcal{J}_{\nu+1}(x)}-\mathcal{J}_{\nu}(x).$$
Now, by again using the differentiation formula
\begin{equation}\label{555}
\mathcal{J}_{\nu}^\prime(x)=-\frac{x}{2(\nu+1)}\mathcal{J}_{\nu+1}(x) 
\end{equation}
we get 
$$f_{\nu,1}^{\prime}(x)=\frac{x\mathcal{J}_{\nu+1}(x)}{2(\nu+1)},$$
and
$$f_{\nu,2}^{\prime}(x)=\frac{x\mathcal{J}_{\nu+1}(x)}{2(\nu+1)}+\frac{\frac{x}{2(\nu+2)}\mathcal{J}_{\nu}(x)\mathcal{J}_{\nu+2}(x)-\frac{x}{2(\nu+1)}\mathcal{J}_{\nu+1}^2(x)}{\mathcal{J}_{\nu+1}^2(x)}.$$
Thus
\begin{equation}\label{888}\frac{f_{\nu,1}^{\prime}(x)}{f_{\nu,2}^{\prime}(x)}=\frac{1}{1+\frac{1}{\mathcal{J}_{\nu+1}(x)}\left(\frac{(\nu+1)\mathcal{J}_{\nu}(x)\mathcal{J}_{\nu+2}(x)}{(\nu+2)\mathcal{J}_{\nu+1}^2(x)}-1\right)}.
\end{equation}
Let 
$$h_\nu(x)=\frac{(\nu+1)\mathcal{J}_{\nu}(x)\mathcal{J}_{\nu+2}(x)}{(\nu+2)\mathcal{J}_{\nu+1}^2(x)}-1.$$
From the Tur\'an type inequality \cite{arb1}
\begin{equation}\label{5555}
\mathcal{J}_{\nu+1}^2(x)-\mathcal{J}_{\nu}(x)\mathcal{J}_{\nu+2}(x)>0.
\end{equation}
where $\nu>-1$ and $x\in(-j_{\nu,1},j_{\nu,1}),$ we conclude that $h_\nu(x)\leq0$ for all $x\in(0,j_{\nu,1}).$ On the other hand, differentiation again and simplifying give 
\begin{equation}\label{999}
h_\nu^{\prime}(x)=\frac{(\nu+1)x}{(\nu+2)\mathcal{J}_{\nu+1}^4(x)}\left[\mathcal{J}_{\nu+1}(x)\mathcal{J}_{\nu+2}(x)\left(\frac{\mathcal{J}_{\nu}(x)\mathcal{J}_{\nu+2}(x)}{\nu+2}-\frac{\mathcal{J}_{\nu+1}^2(x)}{2(\nu+1)}\right)-\frac{\mathcal{J}_{\nu}(x)\mathcal{J}_{\nu+1}^3(x)\mathcal{J}_{\nu+3}(x)}{2(\nu+3)}\right].
\end{equation}
By (\ref{5555}) and (\ref{999}) we easily get
$$h_\nu^{\prime}(x)\leq\frac{x\nu\mathcal{J}_{\nu+1}(x)\mathcal{J}_{\nu+2}(x)}{2(\nu+2)^2}-\frac{x\mathcal{J}_{\nu}(x)\mathcal{J}_{\nu+2}(x)\mathcal{J}_{\nu+3}(x)}{2(\nu+2)(\nu+3)\mathcal{J}_{\nu+1}(x)}.$$
In fact, since the function $\nu\mapsto\mathcal{J}_{\nu}(x)$ is increasing (\cite{arb1}, Theorem 3) on $(-1,\infty)$ for all fixed $x\in(-j_{\nu,1},j_{\nu,1}),$ and $\mathcal{J}_{\nu}(x)\in(0,1],$ we conclude that, by (\ref{999}), the function $h_\nu^{\prime}(x)$ is decreasing on $(0,j_{\nu,1}),$ for all $\nu\in(-1,0].$ Indeed, the function $x\mapsto\mathcal{J}_{\nu}(x)$ is decreasing on $[0,j_{\nu,1})$ (\cite{arb1}, Theorem 3) and nonnegative, which implies that the function $x\mapsto\frac{1}{\mathcal{J}_{\nu+1}(x)}\left(\frac{(\nu+1)\mathcal{J}_{\nu}(x)\mathcal{J}_{\nu+2}(x)}{(\nu+2)\mathcal{J}_{\nu+1}^2(x)}-1\right)$ is decreasing on $(0,j_{\nu,1}),$ for all $\nu\in(-1,0],$ as a product of  two functions one  is increasing and nonnegative and other is decreasing and negative. So, the function $x\mapsto\frac{f_{\nu,1}^{\prime}(x)}{f_{\nu,2}^{\prime}(x)}$ is increasing $(0,j_{\nu,1}),$ for all $\nu\in(-1,0],$ and consequently the function $x\mapsto F_\nu(x)$ is increasing $(0,j_{\nu,1}),$ for all $\nu\in(-1,0],$ by Lemma \ref{l2}. Using L'Hospital rule and (\ref{888}) yields
$$\lim_{x\rightarrow0}F_\nu(x)=\frac{\nu+2}{\nu+1}.$$
Finally, for each $\nu>-1$ and $x\in(0,j_{\nu,1})$ one has $0\leq\mathcal{J}_{\nu}(x)\leq1$, and with this the proof of inequality (\ref{1}) is complete.
\end{proof}
\begin{remark}
Using the relation (\ref{ee}), (\ref{ee1}) and $j_{-1/2,1}=\pi/2$ and from the extended Huygens type inequality (\ref{1}) for $\nu=-\frac{1}{2}$ we obtain the inequality (\ref{0001}).
\end{remark}

In the next Theorem, we establish the analogue of inequality (\ref{1}) involving the modified Bessel functions.

\begin{theorem}\label{t2}
Let $\nu>-1$. Then the Huygens types inequality 
\begin{equation}\label{111}
1> \left(1-\frac{\nu+2}{\nu+1}\right)\mathcal{I}_\nu (x)+\frac{\nu+2}{\nu+1}\frac{\mathcal{I}_{\nu}(x)}{\mathcal{I}_{\nu+1}(x)}
\end{equation}
holds for all $x\in(0,\infty).$
\end{theorem}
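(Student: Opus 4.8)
The plan is to follow the proof of Theorem~\ref{t1}, carefully tracking the sign changes that arise because the differentiation formula for modified Bessel functions reads $\mathcal{I}_{\nu}^{\prime}(x)=\frac{x}{2(\nu+1)}\mathcal{I}_{\nu+1}(x)$ (with a plus sign rather than the minus in (\ref{555})), and because $\mathcal{I}_{\mu}(x)>1$ is increasing on $(0,\infty)$. Put $\beta=\frac{\nu+2}{\nu+1}>1$ and rewrite (\ref{111}) as $1-\mathcal{I}_{\nu}(x)>\beta\left(\frac{\mathcal{I}_{\nu}(x)}{\mathcal{I}_{\nu+1}(x)}-\mathcal{I}_{\nu}(x)\right)$. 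Because $\mathcal{I}_{\nu+1}(x)>1$ forces $\frac{\mathcal{I}_{\nu}(x)}{\mathcal{I}_{\nu+1}(x)}-\mathcal{I}_{\nu}(x)=\mathcal{I}_{\nu}(x)\left(\frac{1}{\mathcal{I}_{\nu+1}(x)}-1\right)<0$ on $(0,\infty)$, dividing reverses the inequality, so it suffices to show
\begin{equation*}
G_{\nu}(x):=\frac{1-\mathcal{I}_{\nu}(x)}{\frac{\mathcal{I}_{\nu}(x)}{\mathcal{I}_{\nu+1}(x)}-\mathcal{I}_{\nu}(x)}<\frac{\nu+2}{\nu+1},\qquad x\in(0,\infty).
\end{equation*}

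Next, set $g_{\nu,1}(x)=1-\mathcal{I}_{\nu}(x)$ and $g_{\nu,2}(x)=\frac{\mathcal{I}_{\nu}(x)}{\mathcal{I}_{\nu+1}(x)}-\mathcal{I}_{\nu}(x)$; both vanish at $x=0$, so $G_{\nu}=g_{\nu,1}/g_{\nu,2}$ falls under Lemma~\ref{l2}. Differentiating exactly as in the derivation of (\ref{888}) — the two sign reversals cancel — I expect the identical form
\begin{equation*}
\frac{g_{\nu,1}^{\prime}(x)}{g_{\nu,2}^{\prime}(x)}=\frac{1}{1+\frac{1}{\mathcal{I}_{\nu+1}(x)}\left(\frac{(\nu+1)\mathcal{I}_{\nu}(x)\mathcal{I}_{\nu+2}(x)}{(\nu+2)\mathcal{I}_{\nu+1}^{2}(x)}-1\right)}.
\end{equation*}
Write $\tilde h_{\nu}(x)=\frac{(\nu+1)\mathcal{I}_{\nu}(x)\mathcal{I}_{\nu+2}(x)}{(\nu+2)\mathcal{I}_{\nu+1}^{2}(x)}-1$ and $\phi=\tilde h_{\nu}/\mathcal{I}_{\nu+1}$. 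The Tur\'an inequality (\ref{TK}) gives $\tilde h_{\nu}(x)<0$, while $1/\mathcal{I}_{\nu+1}$ is positive and decreasing.

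The decisive point — and the one place where the argument departs from Theorem~\ref{t1} — is to prove that $\tilde h_{\nu}$ is \emph{increasing} on $(0,\infty)$ for every $\nu>-1$ (in Theorem~\ref{t1} the analogous $h_{\nu}$ is decreasing, and only for $\nu\le0$). I would not differentiate $\tilde h_{\nu}$: bounding its derivative by (\ref{TK}) alone is too crude to fix the sign. Instead I would apply Lemma~\ref{l1} to the quotient $\frac{\mathcal{I}_{\nu}\mathcal{I}_{\nu+2}}{\mathcal{I}_{\nu+1}^{2}}$, expanding numerator and denominator as power series in $x^{2}$ via the Cauchy product (\ref{cauchy}); the ratio of their $n$-th coefficients collapses to $\frac{\nu+2}{\nu+1}\cdot\frac{\nu+n+1}{\nu+n+2}$, which is strictly increasing in $n$. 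Hence the quotient, and thus $\tilde h_{\nu}$, is strictly increasing in $x$. Therefore $\phi=\tilde h_{\nu}\cdot\frac{1}{\mathcal{I}_{\nu+1}}$ is increasing, being the product of an increasing negative function and a decreasing positive one, and $\phi(x)>\lim_{x\to0^{+}}\phi(x)=\tilde h_{\nu}(0)=-\frac{1}{\nu+2}>-1$, so $1+\phi>0$. Consequently $g_{\nu,1}^{\prime}/g_{\nu,2}^{\prime}=\frac{1}{1+\phi}$ is decreasing, and Lemma~\ref{l2} makes $G_{\nu}$ decreasing on $(0,\infty)$.

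Finally, L'Hospital's rule together with the displayed expression for $g_{\nu,1}^{\prime}/g_{\nu,2}^{\prime}$ gives $\lim_{x\to0^{+}}G_{\nu}(x)=\frac{1}{1+\phi(0^{+})}=\frac{\nu+2}{\nu+1}$, so monotonicity yields $G_{\nu}(x)<\frac{\nu+2}{\nu+1}$ for all $x>0$, which is the reduced inequality and hence (\ref{111}). The main obstacle is exactly the monotonicity of $\tilde h_{\nu}$; the payoff of routing it through Lemma~\ref{l1} rather than through a derivative estimate is that the coefficient ratio is increasing for all $\nu>-1$ simultaneously, and it is precisely this that removes the restriction $\nu\le0$ present in Theorem~\ref{t1}. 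Everything else is sign bookkeeping inherited from Theorem~\ref{t1}, the reversal of direction being explained by the positive sign in $\mathcal{I}_{\nu}^{\prime}$ and by $\mathcal{I}_{\mu}>1$.
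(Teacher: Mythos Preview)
Your proof is correct and follows essentially the same route as the paper: define $G_\nu$, compute $g_{\nu,1}'/g_{\nu,2}'$ via the differentiation formula, use the Tur\'an inequality (\ref{TK}) to get $\tilde h_\nu<0$, use Lemma~\ref{l1} on the Cauchy-product expansion (\ref{cauchy}) to show $\tilde h_\nu$ is increasing (the paper's coefficient ratio $c_n=\frac{\nu+n+1}{\nu+n+2}$ differs from yours only by the harmless constant factor $\frac{\nu+2}{\nu+1}$), and conclude by Lemma~\ref{l2} and L'Hospital. You are in fact slightly more careful than the paper in explicitly checking $1+\phi>0$ and in spelling out why dividing by the negative $g_{\nu,2}$ reverses the inequality.
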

\begin{proof}
Let $\nu>-1$, we define the function $G_\nu$ on $(0,\infty)$ by 
\begin{equation*}
G_\nu(x)=\frac{1-\mathcal{I}_{\nu}(x)}{\frac{\mathcal{I}_{\nu}(x)}{\mathcal{I}_{\nu+1}(x)}-\mathcal{I}_{\nu}(x)}=\frac{g_{\nu,1}(x)}{g_{\nu,2}(x)},
\end{equation*}
where $g_{\nu,1}(x)=1-\mathcal{I}_{\nu}(x)$ and $g_{\nu,2}(x)=\frac{\mathcal{I}_{\nu}(x)}{\mathcal{I}_{\nu+1}(x)}-\mathcal{I}_{\nu}(x).$
By using the differentiation formula [\cite{wat}, p. 79]
\begin{equation}\label{mm}
\mathcal{I}_{\nu}^{\prime}(x)=\frac{x}{2(\nu+1)}\mathcal{I}_{\nu+1}(x)
\end{equation}
can easily show that
\begin{equation}\label{666}
\frac{f_{\nu,1}^{\prime}(x)}{f_{\nu,2}^{\prime}(x)}=\frac{1}{1+\frac{1}{\mathcal{I}_{\nu+1}(x)}\left(\frac{(\nu+1)\mathcal{I}_{\nu}(x)\mathcal{I}_{\nu+2}(x)}{(\nu+2)\mathcal{I}_{\nu+1}^2(x)}-1\right)}.
\end{equation}
Now, for $\nu>-1,$ we define the function $k_\nu$ by:
$$k_\nu(x)=\frac{(\nu+1)\mathcal{I}_{\nu}(x)\mathcal{I}_{\nu+2}(x)}{(\nu+2)\mathcal{I}_{\nu+1}^2(x)}-1.$$
From the Tur\'an type inequality (\ref{TK}) (see Lemma 3), we conclude that $k_\nu(x)\leq0$ for all $x\in\mathbb{R}.$ On the other  hand, using the Cauchy product \ref{cauchy},
we get
$$\frac{(\nu+1)\mathcal{I}_{\nu}(x)\mathcal{I}_{\nu+2}(x)}{(\nu+2)\mathcal{I}_{\nu+1}^2(x)}=\frac{\sum_{n=0}^{\infty}a_n x^{2n}}{\sum_{n=0}^{\infty}b_n x^{2n}},$$
where $a_n(\nu)=\frac{\Gamma^2(\nu+2)\Gamma(2\nu+2n+3)}{2^{2n}\Gamma(n+1)\Gamma(\nu+n+1)\Gamma(\nu+n+3)}$ and $b_n(\nu)=\frac{\Gamma^2(\nu+2)\Gamma(2\nu+2n+3)}{2^{2n}\Gamma(n+1)\Gamma^2(\nu+n+2)}$ for all $n=0,1,...$
So, for all $n=0,1,...,$ we have 
$$c_n(\nu)=\frac{a_n(\nu)}{b_n(\nu)}=\frac{\Gamma^2(\nu+n+2)}{\Gamma(\nu+n+1)\Gamma(\nu+n+3)}=\frac{\nu+n+1}{\nu+n+2},$$
we conclude that $c_n(\nu)$ is increasing for $n=0,1,...,$ and the function $x\mapsto k_\nu(x)$ is increasing on $(0,\infty),$ by Lemma \ref{l1}. Since the function $x\mapsto\frac{1}{\mathcal{I}_{\nu+1}(x)}$ is decreasing and nonnegative on $(0,\infty)$ and the function $x\mapsto k_\nu(x)$ is increasing and negative on $(0,\infty),$ we conclude that $x\mapsto\frac{g_{\nu,1}^{\prime}(x)}{g_{\nu,2}^{\prime}(x)}$ is decreasing on $(0,\infty),$ and consequently the function $x\mapsto G_\nu(x)$ is decreasing on $(0,\infty)$, by Lemma \ref{l1}. 
Therefore, from the L'Hospital rule and (\ref{666}) yields
$$\lim_{x\rightarrow0}G_\nu(x)=\frac{\nu+2}{\nu+1}.$$
Moreover, using the fact $\mathcal{I}(x)\geq 1$, we get the Huygens type inequality (\ref{111}). So, the proof of Theorem \ref{t2} is complete.
\end{proof}
\begin{remark}
1. From the relations (\ref{eee1}) and (\ref{eee2}) we find that the inequality (\ref{111}) is the generalization of inequality (\ref{0002}).\\
2. Since the function $x\mapsto G_\nu(x)$ is decreasing on $(0,\infty)$, and using the asymptotic formula [\cite{1}, p. 377]
\[
I_{\nu}(x)=\frac{e^{x}}{\sqrt{2\pi x}}\left[1-\frac{4\nu^{2}-1}{1!(8x)}+\frac{(4\nu^{2}-1)(4\nu^{2}-9)}{2!(8x)^{2}}-...\right]\]
which holds for large values of $x$ and for fixed $\nu > - 1$, we obtain
$$\lim_{x\longrightarrow\infty}G_{\nu}(x)=1.$$ Then, the following inequality \cite{arb1} 
$$\mathcal{I}_{\nu+1}(x)\leq\mathcal{I}_{\nu}(x),$$
holds for all $x\in\mathbb{R}$ and $\nu>-1.$\\
3. Using the relation (\ref{eee2}) and (\ref{eee3}) from the extended Huygens type inequality (\ref{111}) for $\nu=1/2,$ we obtain the following inequality
$$9>\frac{\sinh x}{x}\left(-6+\frac{5x^3}{x\cosh x-\sinh x}\right),$$
which holds for all $x\in\mathbb{R}.$
\end{remark}

\end{document}